\title{The sum of all width-one tensors}
\author{William Q. Erickson}
\address{
William Q.~Erickson\\
Department of Mathematics\\
Baylor University \\ 
One Bear Place \#97328\\
Waco, TX 76798} 
\email{Will\_Erickson@baylor.edu}
\author{Jan Kretschmann}
\address{
Jan Kretschmann\\
Department of Mathematical Sciences\\
University of Wisconsin--Milwaukee \\ 
3200 N.~Cramer St.\\
Milwaukee, WI 53211} 
\email{kretsc23@uwm.edu}
\newcommand{\T}{\mathcal{T}}
\renewcommand{\emptyset}{\text{\O}}
\newcommand{\la}{\lambda}
\newcommand{\x}{\mathbf{x}}
\newcommand{\n}{\mathbf{n}}
\theoremstyle{plain}
\newtheorem{theorem}{Theorem}[section]
\newtheorem{lemma}[theorem]{Lemma}
\theoremstyle{definition}
\newtheorem{definition}[theorem]{Definition}
\newtheorem{rem}[theorem]{Remark}
\subjclass[2020]{Primary 05E45; Secondary 13F55, 90C27}
\keywords{Multiset Eulerian polynomials, Stanley--Reisner rings, Stanley decompositions, optimal transport}
\begin{document}

\begin{abstract}
    This paper generalizes a recent result by the authors concerning the sum of width-one matrices; in the present work, we consider width-one tensors of arbitrary dimensions.
    A tensor is said to have width 1 if, when visualized as an array, its nonzero entries lie along a path consisting of steps in the directions of the standard coordinate vectors. 
    We prove two different formulas to compute the sum of all width-one tensors with fixed dimensions and fixed sum of (nonnegative integer) components.
    The first formula is obtained by converting width-one tensors into tuples of one-row semistandard Young tableaux;
    the second formula, which extracts coefficients from products of multiset Eulerian polynomials, is derived via Stanley--Reisner theory, making use of the EL-shelling of the order complex on the standard basis of tensors.
    
\end{abstract}

\maketitle

\section{Introduction}
Let $\T^1_{\n,s}$ be the set of all $\n=(n_1,n_2,\ldots,n_d)$-dimensional tensors (equivalently, hypermatrices) with nonnegative integer 
entries summing to $s$, such that the nonzero entries lie on a single path consisting of steps in the directions of the standard basis vectors $e_1, \ldots, e_d$.
For example, Figure \ref{fig:T-1-333-example} shows a typical element of $\T^1_{(3,3,3),s}$, where the nonzero entries sum to $s$, and all lie on the lattice points along the marked path.
\begin{figure}[H]
    \centering
    \tdplotsetmaincoords{70}{110} %

\tdplotsetmaincoords{70}{120} 
\tdplotsetrotatedcoords{0}{0}{0} 
\begin{tikzpicture}[scale=1.5,tdplot_rotated_coords,
            blackBall/.style={ball color = black!80},
                    borderBall/.style={ball color = white,opacity=.25}, 
                    very thick, lightgray]

\foreach \x in {0,1,2}
   \foreach \y in {0,1,2}
      \foreach \z in {0,1,2}{
           \ifthenelse{  \lengthtest{\x pt < 2pt}  }{
             \draw (\x,\y,\z) -- (\x+1,\y,\z);
             \shade[blackBall] (\x,\y,\z) circle (0.025cm); 
           }{}
           \ifthenelse{  \lengthtest{\y pt < 2pt}  }{
               \draw (\x,\y,\z) -- (\x,\y+1,\z);
               \shade[blackBall] (\x,\y,\z) circle (0.025cm);
           }{}
           \ifthenelse{  \lengthtest{\z pt < 2pt}  }{
               \draw (\x,\y,\z) -- (\x,\y,\z+1);
               }{}
}

\draw[ultra thick,black] (2,0,2) -- (1,0,2) -- (1,1,2) -- (1,1,1) -- (1,2,1) -- (1,2,0) -- (0,2,0);
\draw[very thick, lightgray] (2,1,2) -- (2,2,2) -- (2,2,1);



\shade[blackBall] (2,2,2) circle (0.025cm);
\shade[blackBall] (2,2,1) circle (0.025cm);
\shade[blackBall] (2,2,0) circle (0.025cm);

\fill[black] 
(2,0,2) circle (0pt) node[left] {$(1,1,1)$}
(0,2,0) circle (0pt) node[right] {$\n = (3,3,3)$};

\end{tikzpicture}
    \caption{Visualization of an element of $\T^1_{(3,3,3),s}$.}
    \label{fig:T-1-333-example}
\end{figure}
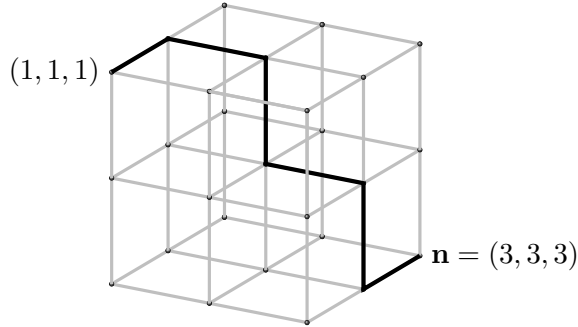

Computing the sum of such matrices in $d=2$ dimensions has useful applications in optimal transport~\cite{EK23}, drastically simplifying the problem of computing the expected value of the one-dimensional earth mover's distance (EMD) between two compositions, as first done by Bourn and Willenbring \cite{BW20}.
It is natural to think about a generalization of the EMD between an arbitrary number $d$ of compositions; this problem is addressed, for instance, in~\cite{Bein} and~\cite{Kline}.
The recursive methods of~\cite{BW20} were generalized in~\cite{E21} to find a recursion for the expected value of the $d$-fold EMD.
Hence this paper plays an analogous role, in generalizing the explicit formulas for two-dimensional matrices in~\cite{EK23} to $d$-dimensional tensors.
In particular, the main results in this paper are two explicit formulas for the sum $\Sigma^1_{\n,s}$ of all tensors in $\T^1_{\n,s}$.
The first is shown in Section~\ref{sec:RSK}, which follows from a translation of the problem into the language Young Tableaux. 
We set up a bijection between any tensor $T \in \T^1_{\n,s}$ and a list of one-row semistandard tableaux, which allows us to write down a formula for our desired sum in terms of binomial coefficients (Theorem~\ref{thm:main result tableaux}).

In Sections~\ref{sec:ASC} and~\ref{sec:Eulerian polys}, we approach the question through the lens of Stanley--Reisner theory. 
We describe the order complex on the standard basis of the $\n$-dimensional tensors, and we use a special case of an EL-shelling to find the corresponding $h$-polynomial. 
This $h$-polynomial turns out to be a multiset Eulerian polynomial, as we show in Section~\ref{sec:Eulerian polys}; these polynomials, studied by MacMahon and many others since, enumerate the descents in multiset permutations.
We conclude by presenting a second explicit formula for $\Sigma^1_{\n,s}$ using techniques from Stanley--Reisner theory (Theorem~\ref{thm:main result h polys}).

Similarly to our two formulas for matrices in \cite{EK23}, the two formulas in this paper are opposite each other with regard to computing time.
Although this issue falls outside the focus of the paper, nonetheless it is not hard to verify that Theorem \ref{thm:main result tableaux} outperforms Theorem \ref{thm:main result h polys} as $s$ increases for fixed $\n$; the opposite is true, however, as the dimension $d$ or the parameters $n_i$ increase for fixed $s$.

\section{Notation and statement of the problem}

Throughout the paper, a boldface letter denotes an element of $(\mathbb{Z}_{>0})^d$; in particular, $\n \coloneqq (n_1, \ldots, n_d)$ and $\x \coloneqq (x_1, \ldots, x_d)$.
We write $|\x| \coloneqq \sum_i x_i$, as well as $\min(\x) \coloneqq \min_i\{x_i\}$ and $\max(\x) \coloneqq \max_i\{x_i\}$.
Let $\mathbf{1} \coloneqq (1, \ldots, 1)$.
As usual, we write $[x] \coloneqq \{1, \ldots, x\}$.
Define the poset
\begin{equation}
    \label{def:Pi}
    \Pi_\x \coloneqq [x_1] \times \cdots \times [x_d] = \{(a_1, \ldots, a_d) : 1 \leq a_i \leq x_i \text{ for all $i$}\},
\end{equation}
equipped with the product order, so that $\mathbf a \leq \mathbf b \iff a_i \leq b_i$ for each $i= 1, \ldots, d$.
Because our main problem addresses $n_1 \times \cdots \times n_d$ tensors, we will always be working inside $\Pi_{\n}$, but it will be useful to consider the subposets $\Pi_{\x}$, which are the lower-order ideals generated by each $\x \in \Pi_{\n}$.

Recall that a \emph{chain} is a totally ordered subset of $\Pi_{\x}$, and an \emph{antichain} is a subset whose elements are pairwise incomparable. 
The \emph{width} of a subset $S \subseteq \Pi_{\x}$ is the size of the largest antichain contained in $S$. 
Equivalently, by Dilworth's theorem, the width of $S$ equals the minimum number of chains into which $S$ can be partitioned.
In particular, $S$ has width 1 if and only if $S$ is a chain.

In this paper, we consider certain tensors of order $d$.
Equivalently, the reader may prefer to consider $d$-dimensional arrays (also called hypermatrices).
Taking the real numbers $\mathbb{R}$ as our ground field, we let 
\[
\T_\n \coloneqq \mathbb{R}^{n_1} \otimes \cdots \otimes \mathbb{R}^{n_d}
\]
denote the space of order-$d$ tensors with dimensions $\n$.
Upon fixing the standard basis $\{e_1, \ldots e_{n_i}\}$ for each factor $\mathbb{R}^{n_i}$, every tensor $T \in \T_\n$ can be written uniquely in the form
\begin{equation} 
    \label{tensor form}
    T = \sum_{\x \in \Pi_{\n}} T[\x] \: e_{x_1} \otimes \cdots \otimes e_{x_d},
\end{equation}
where the scalars $T[\x] \in \mathbb{R}$ are called the \emph{components} of $T$.
Hence $T$ is described completely by its components $T[\x]$, which can be regarded as the entries in an $n_1 \times \cdots \times n_d$ array.  
The \emph{elementary tensor} $E_{\x}$ is given by 
\begin{equation}
    \label{Ex}
    E_{\x}[\mathbf{y}] = \delta_{\x\mathbf{y}},
\end{equation}
where $\delta$ is the Kronecker delta.
Hence $E_\x$ can be regarded as an array with 1 in position $\x$ and 0's elsewhere.

The \emph{support} of a tensor $T$ is the set
\[
\operatorname{supp}(T) \coloneqq \{\x \in \Pi_{\n} : T[\x] \neq 0\}.
\]
We say that $T$ is a \emph{width-one} tensor if $\operatorname{supp}(T)$ has width 1 as a subposet of $\Pi_{\n}$. 
In this paper, we restrict our attention to those width-one tensors whose components are nonnegative integers summing to some positive integer $s$.
We denote this set by
\begin{equation} 
    \label{T1ns def}
    \T^{1}_{\n,s} \coloneqq \left\{T \in \T_\n : \quad 
    \parbox{.28\linewidth}{
    $T$ has width 1,\\
    $T[\x] \in \mathbb{Z}_{\geq 0}$ for all $\x \in \Pi_\n$,\\
    $\sum_\x T[\x] = s$.
    }
\right\}.
\end{equation}
The main problem of this paper is to write down an explicit formula for the sum of all tensors in $\T^{1}_{\n,s}$, under the usual component-wise addition.
We denote this tensor by
\begin{equation}
    \label{Sigma1ns def}
    \Sigma^{1}_{\n,s} \coloneqq \sum_{\mathclap{T \in \T^{1}_{\n,s}}} T.
\end{equation}

\section{Main result, first version}
\label{sec:RSK}

We generalize the argument in~\cite{EK23}*{\S3}.
It follows from~\eqref{tensor form} and~\eqref{Ex} that each tensor $T \in \T_\n$ can be written as a unique sum of elementary tensors
\[
T = \sum_{\x \in \Pi_{\n}} T[\x] E_\x = \sum_{\x \in {\rm supp}(T)} T[\x] E_\x.
\]
Now suppose that $T \in \T^{1}_{\n,s}$, as defined in~\eqref{T1ns def}.  
Then by definition, the $\x$'s appearing in the right-hand sum above form a chain in $\Pi_\n$.
Writing out all of these $\x$'s as column vectors in ascending order, with multiplicities $T[\x]$, we obtain a $d \times s$ matrix.
Note that the entries in each row of this matrix are weakly increasing, and the entries in the $i$th row are elements in $[n_i]$.
We will write each row as a row of boxes.
(We do this primarily to evoke a one-row semistandard Young tableau, which is the same thing as a weakly increasing sequence.)
We thus obtain $d$ rows with $s$ boxes each:
\[
\ytableausetup{boxsize=1.3em, centertableaux}
\begin{array}{cccc}
\ydiagram{3}& \cdots \quad \ytableaushort{{x_1}} \quad \cdots & \ydiagram{3} & \quad \leftarrow \text{weakly increasing in $[n_1]$}\\
\ydiagram{3} & \cdots \quad \ytableaushort{{x_2}} \quad \cdots & \ydiagram{3} & \quad \leftarrow \text{weakly increasing in $[n_2]$}\\
\vdots & \vdots & \vdots  & \vdots\\
\ydiagram{3}& \cdots \quad \ytableaushort{{x_d}} \quad \cdots & \ydiagram{3} & \quad \leftarrow \text{weakly increasing in $[n_d]$}
\end{array}
\]
This procedure is clearly invertible: given $d$ rows of length $s$, with entries in the $i$th row weakly increasing in $[n_i]$, we recover the associated width-one tensor in $\T^{1}_{\n,s}$ by summing the $s$ elementary tensors $E_\x$, for each column $\x$.
In our picture above, we have filled in the entries $\x = (x_1, \ldots, x_d)$ for one typical column; note that this column contributes 1 to the component $T[\x]$.
It follows that we have a bijection between $\T^{1}_{\n,s}$ and the set of $d$-tuples of weakly increasing sequences in $[n_1], \ldots, [n_d]$.

It will be a useful fact that the number of weakly increasing sequences of length $\ell$, taken from the set $[p]$, equals the number of (weak) integer compositions of $\ell$ into $p$ parts, which is well known to be
\begin{equation}
\label{composition formula}
    \binom{\ell + p - 1}{\ell} = \binom{\ell + p - 1}{p-1}.
\end{equation}

\begin{theorem}
    \label{thm:main result tableaux}
    Let $\Sigma^1_{\n,s}$ be the sum of all tensors in $\T^{1}_{\n,s}$.
    For each $\x \in \Pi_\n$, we have
    \[
    \Sigma^{1}_{\n,s}[\x] = \sum_{j=1}^s \prod_{i=1}^d \binom{x_i + j - 2}{j-1} \binom{n_i - x_i + s - j}{s -j}.
    \]
\end{theorem}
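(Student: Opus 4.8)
The bijection in the preceding text identifies $\T^1_{\n,s}$ with $d$-tuples $(w_1,\dots,w_d)$, where each $w_i$ is a weakly increasing length-$s$ sequence with entries in $[n_i]$, and where the $j$-th column of the resulting $d\times s$ array gives a point $\x^{(j)} \in \Pi_{\n}$ contributing $1$ to the component $T[\x^{(j)}]$. The key observation is that

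\[
\Sigma^1_{\n,s}[\x] = \#\{(w_1,\dots,w_d,j) : 1 \le j \le s, \text{ the $j$-th column of the array is }\x\},
\]

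since summing all the tensors $T$ is the same as counting, over all tuples and all column positions $j$, how often the point $\x$ occurs. So I would fix $\x = (x_1,\dots,x_d)$ and a column index $j$, and count the number of $d$-tuples of weakly increasing sequences whose $j$-th column equals $\x$; then sum over $j = 1,\dots,s$.

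**Key steps.** First, because the array is built row-by-row and the rows are independent, the count factors over $i = 1,\dots,d$: I need the number of weakly increasing sequences $a_1 \le a_2 \le \cdots \le a_s$ in $[n_i]$ with $a_j = x_i$. Such a sequence splits into a weakly increasing sequence $a_1 \le \cdots \le a_j = x_i$ of length $j$ with values in $[x_i]$ and ending at $x_i$, together with a weakly increasing sequence $x_i = a_j \le a_{j+1} \le \cdots \le a_s$ of length $s-j+1$ with values in $\{x_i,\dots,n_i\}$ and beginning at $x_i$. The first is equivalent to a weakly increasing sequence of length $j-1$ in $[x_i]$ (drop the forced last entry), counted by \eqref{composition formula} as $\binom{(j-1) + x_i - 1}{j-1} = \binom{x_i + j - 2}{j-1}$; the second is equivalent to a weakly increasing sequence of length $s-j$ in $\{x_i,\dots,n_i\}$, a set of size $n_i - x_i + 1$, counted as $\binom{(s-j) + (n_i - x_i + 1) - 1}{s-j} = \binom{n_i - x_i + s - j}{s-j}$. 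Multiplying over $i$ and summing over $j$ yields the claimed formula.

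**The main obstacle.** There is no serious obstacle; the proof is essentially bookkeeping once the right combinatorial reformulation is in place. The one point requiring care is the normalization of the two sub-counts: making sure the forced coincidence $a_j = x_i$ is accounted for exactly once (it is shared between the ``prefix'' and ``suffix'' pieces), so that the prefix has effective length $j-1$ and the suffix effective length $s-j$, and that the value ranges $[x_i]$ and $\{x_i,\dots,n_i\}$ have the correct sizes $x_i$ and $n_i - x_i + 1$. Getting these off-by-one counts right is what produces exactly the arguments $x_i + j - 2$, $j-1$, $n_i - x_i + s - j$, and $s - j$ appearing in the statement. I would also note the degenerate cases ($j = 1$ or $j = s$, or $x_i = 1$ or $x_i = n_i$) are handled uniformly by the binomial-coefficient conventions.
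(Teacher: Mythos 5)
Your proof is correct and follows exactly the same route as the paper: use the bijection with $d$-tuples of weakly increasing rows, fix a column index $j$, count prefix and suffix fillings per row via~\eqref{composition formula}, multiply over rows, and sum over $j$. The bookkeeping of the shared entry $a_j = x_i$ and the set sizes $x_i$ and $n_i - x_i + 1$ matches the paper's computation, so there is nothing to add.
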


\begin{proof}
    For each $T \in \T^{1}_{\n,s}$, consider its corresponding $d$-tuple of weakly increasing rows, as described above.
    Recall that each column $\x$ contributes 1 to the component $T[\x]$.
    Hence the component $\Sigma^{1}_{\n,s}[\x]$ equals the number of occurrences of the column $\x$, counted in all possible $d$-tuples.

    First, for fixed $j$ such that $1 \leq j \leq s$, we will find the number of $d$-tuples whose $j$th column is $\x$.
    In other words, we seek the number of $d$-tuples such that the $j$th entry in row $i$ is $x_i$, for each $i = 1, \ldots, d$.
    For each $i$, this implies that the $j-1$ entries to the left of $x_i$ lie in the set $[x_i]$, and that the $s-j$ entries to the right of $x_i$ lie in the set $\{x_i, x_i + 1, \ldots, n_i\}$, which contains $n_i + 1 - x_i$ elements.
    Hence by~\eqref{composition formula}, the number of ways to fill each row $i$ such that the $j$th entry is $x_i$ equals
    \[
    \binom{(j-1) + x_i - 1}{j-1} \binom{(s-j) + (n_i + 1 - x_i) - 1}{s-j},
    \]
    which simplifies to the expression in the theorem.
    Taking the product over all rows $i = 1, \ldots, d$, we obtain the number of $d$-tuples whose $j$th column equals $\x$, as desired.
    
    Finally, to obtain the number of times $\x$ occurs as any column in a $d$-tuple, we sum over all columns $j = 1, \ldots, s$.
\end{proof}

\section{Preliminaries: Stanley--Reisner theory}
\label{sec:ASC}

\subsection{Abstract simpicial complexes}
 \label{sub:simplicial complexes}

 Our exposition below is standard, and details can be found in~\cite{StanleyAC}*{Ch.~12}.
 Given a finite set $V$, an \emph{(abstract) simplicial complex} on $V$ is a collection $\Delta$ of subsets of $V$ such that
 \begin{itemize}
     \item $\{v\} \in \Delta$ for all $v \in V$;
     \item if $S \in \Delta$ and $R \subseteq S$, then $R \in \Delta$.
 \end{itemize}
 Elements of $V$ are called \emph{vertices}, and $V$ is called the \emph{vertex set}.  The elements of $\Delta$ are called \emph{faces}, and the \emph{dimension} of a face is one less than its cardinality.  The dimension of $\Delta$ is the maximum of the dimensions of its faces.  
 
Let $\Delta$ be a nonempty simplicial complex of dimension $a-1$. 
We denote by $f_i$ the number of faces of dimension $i$.  
In particular, $f_{-1} = 1$ since $\emptyset \in \Delta$.  
The \emph{$f$-vector} is the sequence $(f_0, \ldots, f_{a-1})$.
It will be more convenient for us to work instead with the \emph{$h$-vector} $(h_0, \ldots, h_a)$, where the numbers $h_k$ are defined as follows:
\[
h_\Delta(t) \coloneqq \sum_{k=0}^a h_k t^k = \sum_{i=0}^a f_{i-1} t^i (1-t)^{a-i}.
\]
We call $h_\Delta(t)$ the \emph{$h$-polynomial} of $\Delta$.
 
A maximal face in $\Delta$ (with respect to inclusion) is called a \emph{facet}. 
We say that $\Delta$ is \emph{pure} if every facet has the same dimension.  
A pure simplicial complex is said to be \emph{shellable} if there exists an ordering $F_1, \ldots, F_{r}$ of its facets with the following property: for all $i = 1, \ldots, r$, the power set of $F_i$ has a unique minimal element ${\rm R}(F_i)$ not belonging to the subcomplex generated by $F_1, \ldots, F_{i-1}$.  
Such an ordering is called a \emph{shelling}, and ${\rm R}(F_i)$ is called the \emph{restriction} of the facet $F_i$.  
Crucial to our method is the following combinatorial description of the $h$-polynomial: if $F_1, \ldots, F_r$ is a shelling of $\Delta$, then we have
\begin{equation}
    \label{h-vec as restrictions}
    h_\Delta(t) = \sum_{i=1}^r t^{\#{\rm R}(F_i)}.
\end{equation}
In other words, $h_k$ counts the number of facets whose restrictions have size $k$:
\[
    h_k = \#\{i : \#{\rm R}(F_i) = k\}.
\]

\subsection{Lexicographic shellings of posets}

Let $\Pi$ be a finite bounded poset, with $\lessdot$ denoting the covering relation.
Let $\mathcal{E} \coloneqq \{(a,b) : a \lessdot b\}$ be the set of edges of the Hasse diagram of $\Pi$.
A \emph{labeling} of $\Pi$ is a function $\la : \mathcal{E} \rightarrow \mathbb{Z}_{>0}$ assigning a positive integer to each edge of the Hasse diagram.
Each labeling $\la$ induces a lexicographic ordering on the set of saturated chains $a_1 \lessdot a_2 \lessdot \cdots \lessdot a_\ell$, via the lexicographic order on the \emph{label sequences} $(\la(a_1,a_2), \ldots, \la(a_{\ell-1},a_\ell))$.
Following Bj\"orner and Wachs~\cite{BW83}, we define a special kind of labeling known as an edge-lexicographical (EL) labeling:

\begin{definition}[\cite{BW83}*{Def.~2.1}]
We say that $\la$ is an \emph{EL-labeling} of $\Pi$ if, for all $a < c$ in $\Pi$,
there exists a unique saturated chain $a \lessdot b_1 \lessdot \cdots \lessdot b_\ell \lessdot c$ such that 
\begin{equation}
    \label{ascending chain}
    \la(a,b_1) \leq \la(b_1, b_2) \leq \cdots \leq \la(b_\ell, c),
\end{equation}
and this chain lexicographically precedes all other saturated chains $a \lessdot \cdots \lessdot c$.
\end{definition}

A chain with the property~\eqref{ascending chain} is called an \emph{ascending chain} with respect to $\la$.

The \emph{order complex} $\Delta(\Pi)$ is the simplicial complex whose faces are the chains in $\Pi$; hence the facets of $\Delta(\Pi)$ are the maximal chains in $\Pi$.
An EL-labeling of $\Pi$ induces a shelling order on the facets of $\Delta(\Pi)$, via the lexicographic order on the maximal chains \cite{Bjorner80}*{Thm.~2.3}.
Note that an EL-labeling does not guarantee that the maximal chains are totally ordered; nevertheless, arbitrarily breaking ties results in a shelling order.

Let $\la$ be an EL-labeling of $\Pi$, and $F$ a facet of $\Delta(\Pi)$.
An element $b \in F$ is said to be a \emph{descent} of $F$ with respect to $\la$, if $F$ contains $a \lessdot b \lessdot c$ such that $\la(a,b) > \la(b,c)$.
With respect to any shelling induced by $\la$, the restriction of each facet is precisely its set of descents:
\begin{equation}
    \label{R equals descents}
    {\rm R}(F) = \{ b : \text{$b$ is a descent of $F$}\}.
\end{equation}
(See~\cite{BW96}*{Thm.~5.8}.)
We will also use the term \emph{descent} in the context of label sequences, in the obvious sense: 
namely, $i$ is a descent of the label sequence $(\la_1, \ldots, \la_\ell)$ if $\la_i > \la_{i+1}$.
In this way, the number of descents in a facet equals the number of descents in its label sequence.

\subsection{The Stanley--Reisner ring}

Let $\Delta$ be a simplicial complex on the vertex set $V$.
Let $K$ be a field, and consider the polynomial ring $K[V] \coloneqq K[z_v : v \in V]$, where we regard the elements of $V$ as indeterminates.
Given a subset $U \subseteq V$, we will use the shorthand
\[
z_U \coloneqq \prod_{v \in U} z_v \qquad \text{and} \qquad K[U] \coloneqq K[v : v \in U ].
\]

Let $I_\Delta$ be the ideal of $K[V]$ generated by all monomials $z_U$ such that $U \not\in \Delta$.
Such a $U$ is called a \emph{nonface} of $\Delta$, and it is easy to see that $I_\Delta$ is actually generated by those nonfaces which are minimal (i.e., which contain no proper nonface).
The \emph{support} of a monomial $m \in K[V]$ is the set $\{ v \in V : \text{$z_v$ divides $m$}\}$.
A $K$-basis for $I_\Delta$ is given by the monomials whose support is not contained in $\Delta$.   

The quotient $K[\Delta] \coloneqq K[V]/I_\Delta$ is called the \emph{Stanley--Reisner} ring of $\Delta$.  
It is clear that $K[\Delta]$ has a $K$-basis consisting of the monomials whose support is a face of $\Delta$ (where we identify these monomials with their images in the quotient ring).

Each shelling of $\Delta$ induces a \emph{Stanley decomposition} of the Stanley--Reisner ring:
\begin{equation}
    \label{Stanley decomp general}
    K[\Delta] = \bigoplus_{F} K[F] \: z_{{\rm R}(F)},
\end{equation}
where the direct sum ranges over the facets $F$, and their restrictions ${\rm R}(F)$ are determined by the shelling.
Crucially, each monomial in $K[\Delta]$ lies in exactly one summand of~\eqref{Stanley decomp general}.
Since $I_\Delta$ is generated by homogeneous polynomials (in fact, by monomials), the quotient $K[\Delta]$ inherits from $K[V]$ the natural grading by degree.
Writing $K[\Delta]_{s}$ to denote the graded component consisting of homogeneous polynomials of degree $s$, we can restrict~\eqref{Stanley decomp general} to a decomposition of each component:
\begin{equation}
    \label{Stanley decomp graded}
    K[\Delta]_s = \bigoplus_k \bigoplus_{\substack{F:\\ \#{\rm R}(F) = k}} K[F]_{s-k} \: z_{{\rm R}(F)},
\end{equation}
where $k$ ranges from 0 to the size of the largest restriction ${\rm R}(F)$.

\subsection{Application to the problem}

In this final subsection, we apply the general theory above to the poset $\Pi_{\x}$ defined in~\eqref{def:Pi}.
We write $\Delta_\x \coloneqq \Delta(\Pi_\x)$ for its order complex.
The facets of $\Delta_\x$ are the maximal chains $\mathbf{1} \lessdot \cdots \lessdot \x$.
Clearly for any facet $F$ of $\Delta_\x$, we have
\begin{equation}
    \label{size of F}
    \#F = |\x| - d  + 1,
\end{equation}
 so $\Delta_\x$ is indeed pure.

Let $\mathbf e_i$ denote the vector whose $i$th coordinate is 1, with 0's elsewhere.
If $\mathbf a \lessdot \mathbf b$, then clearly we have $\mathbf b = \mathbf a + \mathbf e_i$ for some $1 \leq i \leq d$.
We define the following labeling on $\Pi_\x$:
\begin{equation}
    \label{lambda}
    \la(\mathbf a, \mathbf b) = i \iff 
    \mathbf b = \mathbf a + \mathbf e_i.
\end{equation}
For example, if $\mathbf a = (3,6,4,1)$ and $\mathbf b = (3,7,4,1)$, then $\la(\mathbf a, \mathbf b) = 2$.
See Figure~\ref{fig:facet} for a visualization in the case where $\x = (3,3,3)$.

\begin{figure}[t]
    \centering
    \tdplotsetmaincoords{70}{110} %

\tdplotsetmaincoords{70}{120} 
\tdplotsetrotatedcoords{0}{0}{0} 
\begin{tikzpicture}[scale=2,tdplot_rotated_coords,
            blackBall/.style={ball color = black!80},
                    borderBall/.style={ball color = white,opacity=.25}, 
                    very thick, lightgray]

\foreach \x in {0,1,2}
   \foreach \y in {0,1,2}
      \foreach \z in {0,1,2}{
           \ifthenelse{  \lengthtest{\x pt < 2pt}  }{
             \draw (\x,\y,\z) -- (\x+1,\y,\z);
             \shade[blackBall] (\x,\y,\z) circle (0.025cm); 
           }{}
           \ifthenelse{  \lengthtest{\y pt < 2pt}  }{
               \draw (\x,\y,\z) -- (\x,\y+1,\z);
               \shade[blackBall] (\x,\y,\z) circle (0.025cm);
           }{}
           \ifthenelse{  \lengthtest{\z pt < 2pt}  }{
               \draw (\x,\y,\z) -- (\x,\y,\z+1);
               }{}
}

\draw[ultra thick,black] (2,0,2) -- node[above] {3} (1,0,2) -- node[above] {2} (1,1,2) -- node[below right] {1} (1,1,1) -- node[above right] {2} (1,2,1) -- node[above right] {1} (1,2,0) -- node[below] {3} (0,2,0);
\draw[very thick, lightgray] (2,1,2) -- (2,2,2) -- (2,2,1);

\shade[blackBall] (1,0,2) circle (0.08cm); 
\shade[blackBall] (1,1,2) circle (0.08cm);
\shade[blackBall] (1,2,1) circle (0.08cm);

\shade[borderBall] (1,0,2) circle (0.15cm);
\shade[borderBall] (1,1,2) circle (0.15cm);
\shade[borderBall] (1,2,1) circle (0.15cm);

\shade[blackBall] (2,2,2) circle (0.025cm);
\shade[blackBall] (2,2,1) circle (0.025cm);
\shade[blackBall] (2,2,0) circle (0.025cm);

\fill[black] 
(2,0,2) circle (0pt) node[left] {$\mathbf{1} = (1,1,1)$}
(0,2,0) circle (0pt) node[right] {$\x = (3,3,3)$};

\end{tikzpicture}
    \caption{Visualization of a facet $F$ in the order complex $\Delta_\x$, where $\x = (3,3,3)$.
    Starting in the upper-left at $\mathbf{1}$, we imagine the coordinate vector $\mathbf{e}_1$ pointing downward, $\mathbf{e}_2$ pointing to the right, and $\mathbf{e}_3$ pointing away from the viewer.
    With respect to the labeling $\la$ in~\eqref{lambda}, the label sequence $(3,2,1,2,1,3)$ of $F$ is shown in the figure.
    The descents are indicated by the three large dots; by~\eqref{R equals descents}, these are the elements of ${\rm R}(F)$.}
    \label{fig:facet}
\end{figure}
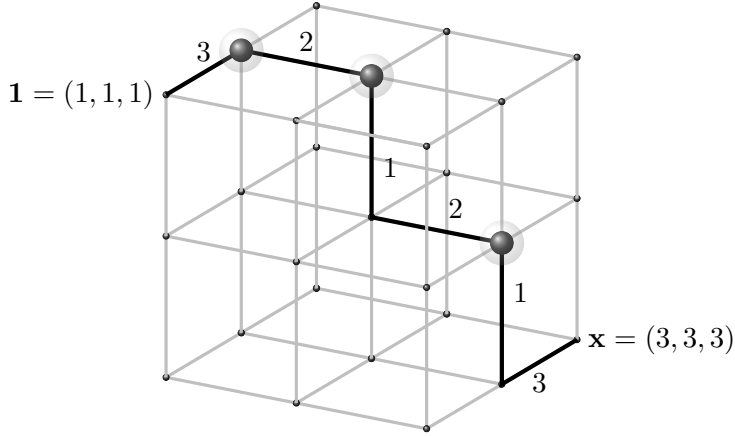

It is easy to see that $\la$ is an EL-labeling: for $\mathbf a < \mathbf b$, the unique ascending chain $\mathbf a \lessdot \cdots \lessdot \mathbf b$ with respect to $\la$ is obtained from $\mathbf a$ by first adding $\mathbf e_1$ a total of $b_1 - a_1$ times, then adding $\mathbf e_2$ a total of $b_2 - a_2$ times, etc., and finally adding $\mathbf e_d$ a total of $b_d - a_d$ times.
Moreover, this chain precedes any other maximal chain between $\mathbf a$ and $\mathbf b$.
Being an EL-labeling, $\la$ induces a unique shelling of $\Delta_\x$, since the lexicographical order (in this case) gives a total ordering of the facets.

We now turn to our main problem: writing down a formula for each component of $\Sigma^{1}_{\n,s}$, which we recall from~\eqref{Sigma1ns def} is the sum of all tensors in $\T^{1}_{\n,s}$.
To this end, note that a $K$-basis for $K[\Delta_\n]$ is given by the monomials whose support is a chain in $\Pi_\n$.
Restricting to the degree-$s$ component, we observe the bijection
\begin{align*}
\T^{1}_{\n,s} & \longleftrightarrow \text{$K$-basis of $K[\Delta_\n]_s$},\\
T & \longleftrightarrow \prod_{\x \in \Pi_\n} z_\x^{T[\x]}.
\end{align*}
Under this correspondence, adding tensors corresponds to multiplying monomials.
Therefore, letting $m$ range over the monomials, we have
\begin{equation}
    \label{adding arrays is mult monomials}
    \prod_{\mathclap{m \in K[\Delta_\n]_s}} m = \prod_{\x \in \Pi_\n} z_{\x}^{\Sigma^{1}_{\n,s}[\x]}.
\end{equation}
Hence our main problem is equivalent to finding the exponent of each indeterminate $z_\x$ in the product of monomials on the left-hand side of~\eqref{adding arrays is mult monomials}.
We will do this in Section~\ref{sec:result version 2}.  
Before that, however, we must explain and exploit the fact that the $h$-polynomial of $\Delta_\x$ is actually a well-known object called the \emph{multiset Eulerian polynomial}.

\section{Multiset Eulerian polynomials}
\label{sec:Eulerian polys}

Let $\mathbf{p} = (p_1, \ldots, p_d)$.  A \emph{multipermutation} of the multiset $\{1^{p_1}, 2^{p_2}, \ldots, d^{p_d}\}$ is a word $\pi = \pi_1 \cdots \pi_{|\mathbf{p}|}$ in which $i$ appears exactly $p_i$ times, for each $1 \leq i \leq d$.
Let $\mathfrak{S}_{\mathbf p}$ be the set of all such multipermutations.
A \emph{descent} of a multipermutation $\pi$ is an index $i$ such that $\pi_i > \pi_{i+1}$.
Let ${\rm des}(\pi)$ denote the number of descents of $\pi$.
Then the \emph{multiset Eulerian polynomial} $A_{\mathbf p}(t)$ is defined to be
\[
    A_{\mathbf p}(t) \coloneqq \sum_{\pi \in \mathfrak{S}_{\mathbf p}} t^{\:{\rm des}(\pi)}.
\]
(The special case $A_{\mathbf 1}(t)$ is just the $d$th Eulerian polynomial $A_d(t)$, i.e., the descent-generating function over the symmetric group $\mathfrak{S}_d$.  
See, for example,~\cite{Stanley1}*{p.~22}, although the convention there is to multiply through by $t$.)
The multiset Eulerian polynomial occurs as the numerator of the following generating function, due to MacMahon~\cite{MacMahon}*{p.~211}:
\begin{equation}
    \label{MacMahon gf}
    \frac{A_{\mathbf p}(t)}{(1-t)^{|\mathbf p|+1}} = \sum_{\ell = 0}^{\infty} \prod_{i=1}^d\binom{p_i + \ell}{\ell} t^\ell.
\end{equation}
From~\eqref{MacMahon gf} we can obtain an explicit expression for the coefficients in $A_{\mathbf p}(t)$; see also~\cites{DR69,Abramson75} for combinatorial proofs of the formula below.
(These coefficients are known as ``Simon Newcomb numbers.'')
Writing $[t^k]$ for the coefficient of $t^k$, we have
\begin{equation}
    \label{formula Euler numbers}
    [t^k] A_{\mathbf p}(t) = \#\Big\{ \pi \in \mathfrak{S}_{\mathbf p} : {\rm des}(\pi) = k\Big\} = 
    \sum_{\ell = 0}^k (-1)^\ell \binom{|\mathbf{p}| + 1}{\ell} \prod_{i=1}^d \binom{p_i + k - \ell}{k-\ell}.
\end{equation}
It is shown in~\cite{DR69}*{Lemma~2} that the maximum number of descents, i.e., the degree of $A_{\mathbf p}(t)$, equals 
\begin{equation}
    \label{degree of AMt}
    |\mathbf{p}| - \max(\mathbf p).
\end{equation}

\begin{lemma}
    \label{lemma: h-poly}
    The $h$-polynomial $h_{\x}(t)$ of $\Delta_\x$ is the multiset Eulerian polynomial $A_{\x-\mathbf{1}}(t)$.
\end{lemma}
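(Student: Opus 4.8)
The plan is to compute the $h$-polynomial $h_\x(t)$ directly from the shelling of $\Delta_\x$ induced by the EL-labeling $\la$ defined in~\eqref{lambda}, using the combinatorial description~\eqref{h-vec as restrictions}: namely, $h_\x(t) = \sum_F t^{\#{\rm R}(F)}$, where $F$ ranges over the facets of $\Delta_\x$, and by~\eqref{R equals descents} the size $\#{\rm R}(F)$ equals the number of descents in the label sequence of $F$. So the whole argument reduces to matching up facets of $\Delta_\x$ (equivalently, their label sequences) with multipermutations of the multiset $\{1^{x_1-1}, 2^{x_2-1}, \ldots, d^{x_d-1}\}$, in a way that preserves the descent statistic.

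The key observation is that a facet $F$ of $\Delta_\x$ is a maximal chain $\mathbf{1} \lessdot \cdots \lessdot \x$ in $\Pi_\x$, and by~\eqref{size of F} it has $|\x| - d$ covering steps; each step adds some $\mathbf e_i$, and the step contributes the letter $i$ to the label sequence. Since the chain goes from $\mathbf 1$ to $\x$, the letter $i$ must be used exactly $x_i - 1$ times. Hence the label sequences of facets of $\Delta_\x$ are precisely the words in which $i$ appears $x_i - 1$ times for each $i$ — that is, exactly the multipermutations in $\mathfrak{S}_{\x - \mathbf 1}$. Conversely, any such word determines a unique lattice path from $\mathbf 1$ to $\x$, hence a unique facet. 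This bijection is clearly descent-preserving: as noted in the excerpt (just before \S\ref{sub:simplicial complexes} ends, in the discussion after~\eqref{R equals descents}), the number of descents of a facet equals the number of descents of its label sequence, which is by definition ${\rm des}(\pi)$ for the corresponding multipermutation $\pi$.

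First I would state precisely this bijection $F \leftrightarrow \pi_F$ between facets of $\Delta_\x$ and elements of $\mathfrak{S}_{\x-\mathbf 1}$, and verify it is well-defined both ways (the only thing to check is that a word with the right letter-multiplicities always yields a chain that stays inside $\Pi_\x = [x_1]\times\cdots\times[x_d]$ — true because partial sums of the $\mathbf e_i$'s are componentwise monotone and bounded by $\x$). Then I would invoke~\eqref{h-vec as restrictions}, \eqref{R equals descents}, and the descent-preservation to write
\[
h_\x(t) = \sum_{F} t^{\#{\rm R}(F)} = \sum_{F} t^{{\rm des}(\pi_F)} = \sum_{\pi \in \mathfrak{S}_{\x - \mathbf 1}} t^{{\rm des}(\pi)} = A_{\x - \mathbf 1}(t),
\]
the last equality being the definition of the multiset Eulerian polynomial. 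One should also check that the degree bound is consistent: the largest restriction has size at most $\#F - 1 = |\x| - d$, and indeed $\deg A_{\x-\mathbf 1}(t) = |\x - \mathbf 1| - \max(\x - \mathbf 1) = (|\x| - d) - (\max(\x) - 1)$ by~\eqref{degree of AMt}, which is $\leq |\x| - d$ as required — a sanity check rather than a needed step.

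I do not expect a serious obstacle here; the proof is essentially a dictionary between the two combinatorial models. The one point requiring a little care is making sure the shelling order actually exists and that~\eqref{R equals descents} applies: this was already established in the excerpt, where it is shown that $\la$ is an EL-labeling and that (since lex order totally orders the maximal chains in this case) it induces a genuine shelling of $\Delta_\x$. Given that, the identification of restrictions with descent sets is immediate from~\eqref{R equals descents}, and the rest is bookkeeping with multiplicities of letters along a lattice path.
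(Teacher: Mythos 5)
Your proof is correct and follows essentially the same route as the paper: set up the bijection between facets of $\Delta_\x$ and multipermutations in $\mathfrak{S}_{\x-\mathbf 1}$ via label sequences under the EL-labeling $\la$, observe that this bijection matches restriction size with descent count via~\eqref{R equals descents}, and then compare~\eqref{h-vec as restrictions} with the definition of $A_{\x-\mathbf 1}(t)$. The extra well-definedness check (partial sums staying inside $\Pi_\x$) and the degree sanity check are harmless additions; the core argument is the same.
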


\begin{proof}
Recall the labeling $\la$ in~\eqref{lambda}, which induces a shelling of $\Delta_\x$.
With respect to $\la$, the label sequence of each facet $\mathbf{1} \lessdot \cdots \lessdot \x$ contains $x_i - 1$ copies of the label $i$, for each $i = 1, \ldots, d$; conversely, each possible permutation of these labels (where the copies of each $i$ are indistinguishable from each other) is the label sequence of a unique facet.
Hence we have a bijection between the set of facets of $\Delta_\x$ and the set $\mathfrak{S}_{\x - \mathbf{1}}$.
By~\eqref{R equals descents}, the size of ${\rm R}(F)$ equals the number of descents in $F$, which in turn equals the number of descents in the label sequence of $F$.
Therefore, comparing~\eqref{h-vec as restrictions} and~\eqref{formula Euler numbers}, it is clear that $h_\x(t) = A_{\x-\mathbf{1}}(t)$.  
\end{proof}



\begin{rem}
    Combining~\eqref{h-vec as restrictions} with the Stanley decomposition~\eqref{Stanley decomp general}, it is easy to see that the Hilbert series of a Stanley--Reisner ring $K[\Delta]$ is given by
    \[
        \frac{h_\Delta(t)}{(1-t)^{\#F}},
    \]
    where $F$ is any facet of $\Delta$ (since $\Delta$ is assumed to be pure).
    Thus by Lemma~\eqref{lemma: h-poly} and~\eqref{size of F}, and by MacMahon's expansion~\eqref{MacMahon gf}, our particular ring $K[\Delta_\n]$ has the Hilbert series
    \[
        \frac{A_{\n - \mathbf{1}}(t)}{(1-t)^{|\n| - d + 1}} = \sum_{\ell = 0}^\infty \prod_{i=1}^d \binom{n_i + \ell - 1}{\ell} t^\ell.
    \]
    This is also the Hilbert series of the coordinate ring of the set of simple (also called pure, or decomposable) tensors in $\T_\n$, which is isomorphic to $K[\Delta_\n]$.
    See~\cite{Morales}*{Thm.~5}, where this ring is also viewed as the toric ring defined by the Segre embedding of $\mathbb{P}^{n_1} \times \cdots \times \mathbb{P}^{n_d}$.
\end{rem}

Combining~\eqref{degree of AMt} with Lemma~\ref{lemma: h-poly}, we note that the degree of $h_\x(t) = A_{\x-\mathbf{1}}(t)$ equals
\begin{equation}
    \label{degree hx}
    |\x - \mathbf{1}| - \max(\x-\mathbf{1}) = |\x| - \max(\x) - d + 1.
\end{equation}
Equivalently,~\eqref{degree hx} is the maximum size of ${\rm R}(F)$, taken over all facets $F$ of $\Delta_{\x}$.

\section{Main result, second version}
\label{sec:result version 2}

\begin{theorem}
    \label{thm:main result h polys}
    For each $\x \in \Pi_\n$, we have
    \[
    \Sigma^{1}_{\n,s}[\x] = \sum_{k=0}^{\min\{\omega(\n,\x), \: s-1\}} \binom{|\n| - d + s - k}{s-k-1} \cdot [t^k] \, A_{\x-\mathbf 1}(t) A_{\n - \x}(t),
    \]
    where $\omega(\n,\x) \coloneqq |\n| - \max(\x) - \max(\n-\x) - d + 1$.
\end{theorem}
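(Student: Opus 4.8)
The plan is to read off the exponent of each $z_\x$ in the product of monomials appearing in \eqref{adding arrays is mult monomials}, working entirely with Hilbert series. First I would note that this exponent equals $\sum_m v_\x(m)$, the sum over the monomial basis of $K[\Delta_\n]_s$ of the exponent $v_\x(m)$ of $z_\x$ in $m$. Rewriting it as $\sum_{j\ge1}\#\{m\in K[\Delta_\n]_s : z_\x^j\mid m\}$ and using the bijection $m\leftrightarrow m/z_\x^j$, the $j$-th term equals the number $D_{s-j}$ of monomials $m'\in K[\Delta_\n]_{s-j}$ for which $\operatorname{supp}(m')\cup\{\x\}$ is a chain in $\Pi_\n$ (both directions of the bijection are readily checked to be well defined). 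Hence $\Sigma^1_{\n,s}[\x]=\sum_{\ell=0}^{s-1}D_\ell$ after reindexing by $\ell=s-j$, and it remains to compute the generating function $\sum_{\ell\ge0}D_\ell\,t^\ell$.

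The key structural observation is that any monomial $m'$ with $\operatorname{supp}(m')\cup\{\x\}$ a chain factors uniquely as $z_\x^a\cdot m^{<}\cdot m^{>}$, where $a\ge0$ and $\operatorname{supp}(m^{<})$, $\operatorname{supp}(m^{>})$ are chains in the subposets $P^{<}\coloneqq\{\mathbf a\in\Pi_\n:\mathbf a<\x\}$ and $P^{>}\coloneqq\{\mathbf a\in\Pi_\n:\mathbf a>\x\}$, respectively (every element of $P^{<}$ lies below every element of $P^{>}$, so the chain condition just says the two parts are chains separately). Therefore $\sum_{\ell}D_\ell\,t^\ell=\frac{1}{1-t}\operatorname{Hilb}K[\Delta(P^{<})]\cdot\operatorname{Hilb}K[\Delta(P^{>})]$. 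Now $P^{<}$ is the box poset $\Pi_\x$ with its top element removed, while $P^{>}\cong\Pi_{\n-\x+\mathbf1}\setminus\{\mathbf1\}$ is a box poset with its bottom element removed; and whenever $v$ is an extreme (top or bottom) element of a bounded poset $Q$, one has $K[\Delta(Q)]\cong K[z_v]\otimes_K K[\Delta(Q\setminus\{v\})]$ as graded algebras, because $v$ is comparable to every element of $Q$, so that deleting $v$ multiplies the Hilbert series by $1-t$. Combining this with the Hilbert series $A_{\mathbf y-\mathbf1}(t)/(1-t)^{|\mathbf y|-d+1}$ of $K[\Delta(\Pi_{\mathbf y})]$ for a box poset (which follows from Lemma~\ref{lemma: h-poly}, \eqref{size of F}, and the formula for the Hilbert series of a pure Stanley--Reisner ring), I obtain
\[
\operatorname{Hilb}K[\Delta(P^{<})]=\frac{A_{\x-\mathbf1}(t)}{(1-t)^{|\x|-d}},
\qquad
\operatorname{Hilb}K[\Delta(P^{>})]=\frac{A_{\n-\x}(t)}{(1-t)^{|\n|-|\x|}},
\]
and hence $\sum_\ell D_\ell\,t^\ell = \frac{A_{\x-\mathbf1}(t)\,A_{\n-\x}(t)}{(1-t)^{|\n|-d+1}}$.

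The remainder is coefficient extraction. Since $\sum_{\ell=0}^{s-1}D_\ell=[t^{s-1}]\big(\frac{1}{1-t}\sum_{\ell\ge0}D_\ell\,t^\ell\big)$, we get
\[
\Sigma^1_{\n,s}[\x]=[t^{s-1}]\,\frac{A_{\x-\mathbf1}(t)\,A_{\n-\x}(t)}{(1-t)^{|\n|-d+2}}.
\]
Expanding $(1-t)^{-(|\n|-d+2)}=\sum_{i\ge0}\binom{i+|\n|-d+1}{|\n|-d+1}\,t^i$ and extracting the coefficient of $t^{s-1}$ gives $\sum_k\binom{|\n|-d+s-k}{|\n|-d+1}\,[t^k]\,A_{\x-\mathbf1}(t)A_{\n-\x}(t)$, and the identity $\binom{|\n|-d+s-k}{|\n|-d+1}=\binom{|\n|-d+s-k}{s-k-1}$ produces the stated formula. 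The summation range is forced: the binomial vanishes when $k\ge s$, while $[t^k]\,A_{\x-\mathbf1}(t)A_{\n-\x}(t)$ vanishes for $k$ exceeding $\deg A_{\x-\mathbf1}+\deg A_{\n-\x}$, which by \eqref{degree of AMt} equals $\omega(\n,\x)$.

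I expect the main obstacle to be the bookkeeping in the middle paragraph: correctly identifying $P^{<}$ and $P^{>}$ as box posets with an extreme element deleted, checking the graded-algebra isomorphism above, and keeping the powers of $1-t$ consistent through the coordinate shift $\mathbf y=\n-\x+\mathbf1$. The reduction to the numbers $D_\ell$ at the start and the coefficient extraction at the end are routine.
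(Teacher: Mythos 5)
Your argument is correct, and it takes a genuinely different route from the paper's.
Both proofs start from the same observation that $\Sigma^1_{\n,s}[\x]$ is the exponent of $z_\x$ in the product of all monomials in $K[\Delta_\n]_s$; from there the paper plugs in the Stanley decomposition~\eqref{Stanley decomp graded} induced by the EL-shelling and proceeds facet-by-facet, via Lemma~\ref{lemma:alpha} (an averaging count of the $z_\x$-exponent within each summand $K[F]_{s-k-1}z_{{\rm R}(F)\cup\{\x\}}$) and Lemma~\ref{lemma: coeff tk} (splitting facets through $\x$ to realize the product $A_{\x-\mathbf1}A_{\n-\x}$ as a generating function for $\#({\rm R}(F)\setminus\{\x\})$). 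You instead convert the exponent sum into the tail-count $\sum_{j\geq1}\#\{m : z_\x^j\mid m\}$, identify the subalgebra of monomials whose support together with $\x$ is a chain as $K[z_\x]\otimes K[\Delta(P^<)]\otimes K[\Delta(P^>)]$, and compute everything at the level of Hilbert series, using the cone isomorphism $K[\Delta(Q)]\cong K[z_v]\otimes K[\Delta(Q\setminus\{v\})]$ for an extreme element $v$. The Eulerian polynomials enter only through the Hilbert-series identity in the Remark (itself a consequence of Lemma~\ref{lemma: h-poly}), so the shelling is used indirectly rather than operationally. The trade-off: your route avoids the two counting lemmas and the explicit bookkeeping of restrictions, giving a shorter, more algebraic derivation; the paper's route exposes more of the combinatorics of descents and shelling restrictions, which is also what motivates Section~\ref{sec:Eulerian polys}. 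All the computational details in your sketch (the factorization through $P^<$, $P^>$, the powers of $1-t$ under the translation $\mathbf y=\n-\x+\mathbf1$, the final coefficient extraction and the determination of the range via~\eqref{degree of AMt}) check out.
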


Before giving the proof, we record two easy counting lemmas.

\begin{lemma}
    \label{lemma:alpha}
    Let $\x \in \Pi_n$, and let $F \ni \x$ be a facet of $\Delta_\n$.
    Then $\binom{|\n| - d + s - k}{s-k-1}$ equals the exponent of $z_\x$ in the product of all monomials in
\begin{equation}
    \label{summand in lemma}
    K[F]_{s-k-1} z_{_{{\rm R}(F) \cup \{\x\}}}.
\end{equation}
\end{lemma}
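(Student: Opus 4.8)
The plan is to exploit the fact that, since $F$ is a facet of $\Delta_\n$ and hence a maximal chain $\mathbf 1 \lessdot \cdots \lessdot \n$, the ring $K[F]$ is an honest polynomial ring in $\#F = |\n| - d + 1$ indeterminates, by~\eqref{size of F}. Write $v \coloneqq |\n| - d + 1$. A $K$-basis of the graded piece $K[F]_{s-k-1}$ consists of the monomials of degree $s-k-1$ in these $v$ variables, and the monomials occurring in the summand~\eqref{summand in lemma} are precisely the products $m \cdot z_{{\rm R}(F)\cup\{\x\}}$ as $m$ runs over this basis. The first observation is that, whether or not $\x$ is a descent of $F$, we have $\x \in {\rm R}(F)\cup\{\x\}$, so $z_\x$ occurs in $z_{{\rm R}(F)\cup\{\x\}}$ with exponent exactly $1$; hence the exponent of $z_\x$ in $m\cdot z_{{\rm R}(F)\cup\{\x\}}$ is $\deg_{z_\x}(m)+1$.

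Consequently the exponent of $z_\x$ in the product of all monomials in~\eqref{summand in lemma} equals $\sum_m\bigl(\deg_{z_\x}(m)+1\bigr) = \sum_m \deg_{z_\x}(z_\x m)$, the sum ranging over all degree-$(s-k-1)$ monomials $m$ in the $v$ variables of $K[F]$. I would then reindex by $M \coloneqq z_\x m$: as $m$ ranges over all such monomials, $M$ ranges over all degree-$(s-k)$ monomials divisible by $z_\x$, while any degree-$(s-k)$ monomial not divisible by $z_\x$ contributes $0$ to the corresponding sum. Hence the desired exponent equals $\sum_M \deg_{z_\x}(M)$, now summed over \emph{all} monomials $M$ of degree $s-k$ in $v$ variables.

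The last step is a symmetry (averaging) argument: the set of degree-$(s-k)$ monomials in $v$ variables is invariant under permuting the variables, so each variable carries the same total exponent; thus $\sum_M \deg_{z_\x}(M) = \tfrac{1}{v}\sum_M \deg(M) = \tfrac{s-k}{v}\binom{s-k+v-1}{s-k}$, which simplifies via the identity $\tfrac{e}{v}\binom{e+v-1}{e} = \binom{e+v-1}{e-1}$ to $\binom{|\n|-d+s-k}{s-k-1}$, as claimed. (Alternatively, one can count the $N = \binom{|\n|-d+s-k-1}{s-k-1}$ monomials $m$ directly, evaluate $\sum_m\deg_{z_\x}(m) = \binom{|\n|-d+s-k-1}{s-k-2}$ by the same averaging, and add the two using Pascal's rule.) There is no serious obstacle here; the only points demanding care are the uniform treatment of the two cases $\x \in {\rm R}(F)$ and $\x \notin {\rm R}(F)$, and keeping the binomial bookkeeping straight through the reindexing $m \mapsto z_\x m$.
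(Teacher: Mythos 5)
Your proof is correct and uses the same central idea as the paper's: by the symmetry of the monomial set under permutation of the $\#F = |\n|-d+1$ variables, each variable carries an equal share of the total degree, which yields the exponent of $z_\x$ by averaging. Your reindexing $m \mapsto z_\x m$ to pass from degree-$(s-k-1)$ to degree-$(s-k)$ monomials before averaging is a mild streamlining of the paper's ``average plus one'' bookkeeping, but the argument is essentially identical.
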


\begin{proof}
It suffices to show that
\[
\binom{|\n| - d + s - k}{s-k-1} = \Big(\text{\# monomials in \eqref{summand in lemma}}\Big) \Big(\text{average exponent of $z_\x$ in each monomial}\Big).
\]
The number of monomials in~\eqref{summand in lemma} equals the number of monomials in $K[F]_{s-k-1}$.
This number, in turn, equals the number of weak compositions of the degree $s-k-1$ into $\#F$ many parts.
Thus, recalling from~\eqref{size of F} that $\#F = |\n| - d  +1$ for any facet $F$ of $\Delta_\n$, and using the elementary formula~\eqref{composition formula}, we have
\begin{equation}
    \label{number monomials}
    \text{\# monomials in~\eqref{summand in lemma}} = \binom{s-k-1 + |\n| - d}{|\n| - d}.
\end{equation}
The average exponent of $z_\x$, taken over all the monomials in $K[F]_{s-k-1}$, equals the degree $s-k-1$ divided by the number of variables $\#F$.
Adding 1 to this average to account for the factor of $z_\x$ present in $z_{_{{\rm R}(F) \cup \{\x\}}}$ in~\eqref{summand in lemma}, we obtain 
\begin{equation}
    \label{avg exp}
    \text{average exponent of $z_\x$ in each monomial} = 1 + \frac{s-k-1}{|\n| -d + 1} = \frac{|\n| - d + s - k}{|\n| - d +1}.
\end{equation}
Multiplying the expressions in~\eqref{number monomials} and~\eqref{avg exp}, we obtain
\[
\binom{s-k-1 + |\n| - d}{|\n| - d} \cdot \frac{|\n| - d + s - k}{|\n| - d +1} = \binom{|\n| - d + s - k}{|\n| - d + 1} = \binom{|\n| - d + s - k}{s-k-1}. \qedhere
\]
\end{proof}

\begin{lemma}
    \label{lemma: coeff tk}
    For $\x \in \Pi_\n$, the coefficient
    \[
    [t^k] \, A_{\x-\mathbf{1}}(t) A_{\n - \x}(t)
    \]
    equals the number of facets $F \ni \x$ of $\Delta_\n$, such that $\#\big({\rm R}(F) \setminus \{\x\}\big) = k$.
\end{lemma}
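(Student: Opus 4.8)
The plan is to factor a facet $F \ni \x$ of $\Delta_\n$ into its "lower half" and "upper half" at the distinguished element $\x$, and to track how the descents distribute across this split. Recall that a facet $F$ is a maximal chain $\mathbf 1 \lessdot \cdots \lessdot \x \lessdot \cdots \lessdot \n$ in $\Pi_\n$. Any such chain passing through $\x$ decomposes uniquely as a maximal chain $F_{\leq}$ from $\mathbf 1$ to $\x$, concatenated with a maximal chain $F_{\geq}$ from $\x$ to $\n$. Conversely, any choice of a maximal chain in the interval $[\mathbf 1, \x]$ together with a maximal chain in $[\x,\n]$ glues to give a facet of $\Delta_\n$ through $\x$. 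Since $[\mathbf 1,\x] \cong \Pi_\x$ and $[\x,\n] \cong \Pi_{\n - \x + \mathbf 1}$ as posets (shifting coordinates), this gives a bijection between facets $F \ni \x$ of $\Delta_\n$ and pairs $(G, H)$ where $G$ is a facet of $\Delta_\x$ and $H$ is a facet of $\Delta_{\n - \x + \mathbf 1}$.

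Next I would analyze the descent set under this bijection. Using the labeling $\la$ from~\eqref{lambda}, the label sequence of $F$ is the concatenation of the label sequence of $F_\leq$ (a word in $\mathfrak S_{\x - \mathbf 1}$) with the label sequence of $F_\geq$ (a word in $\mathfrak S_{\n - \x}$). The descents of $F$ lying strictly below $\x$ are exactly the descents of $G$, and those lying strictly above $\x$ are exactly the descents of $H$; these are indexed by ${\rm R}(G)$ and ${\rm R}(H)$ respectively via~\eqref{R equals descents}. The one subtlety is whether $\x$ itself is a descent of $F$: this happens precisely when the last label of $F_\leq$ exceeds the first label of $F_\geq$, which is a "boundary" descent not accounted for by either half. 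Since the lemma concerns $\#\big({\rm R}(F)\setminus\{\x\}\big)$, we are counting exactly the descents of $F$ other than a possible descent at $\x$, i.e., $\#{\rm R}(G) + \#{\rm R}(H) = {\rm des}(G) + {\rm des}(H)$.

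Finally I would assemble the count. For a fixed $k$, summing over all facets $F \ni \x$ with $\#\big({\rm R}(F)\setminus\{\x\}\big) = k$, the bijection and the additivity of descents give
\[
\#\Big\{F \ni \x : \#\big({\rm R}(F)\setminus\{\x\}\big) = k\Big\} = \sum_{a + b = k} \big[t^a\big] h_\x(t) \cdot \big[t^b\big] h_{\n - \x + \mathbf 1 - \mathbf 1}(t),
\]
and by Lemma~\ref{lemma: h-poly} we have $h_\x(t) = A_{\x - \mathbf 1}(t)$ and $h_{\n - \x + \mathbf 1}(t) = A_{\n - \x}(t)$. The right-hand side is then $[t^k]\big(A_{\x-\mathbf 1}(t)\, A_{\n - \x}(t)\big)$, as claimed. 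The main obstacle is making the descent bookkeeping at the junction precise—specifically, checking carefully that $\x$ as an element of $F$ corresponds to neither a descent of $G$ nor a descent of $H$, and that its own descent status (governed by the transition label at $\x$) is genuinely extra, so that removing $\x$ from ${\rm R}(F)$ leaves exactly ${\rm R}(G) \sqcup {\rm R}(H)$ regardless of that status. Everything else is the standard order-complex-of-a-product-interval argument.
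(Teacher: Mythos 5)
Your proof is correct and follows essentially the same approach as the paper's: decompose each facet $F\ni\x$ into the lower chain from $\mathbf 1$ to $\x$ and the upper chain from $\x$ to $\n$, identify these with facets of $\Delta_\x$ and $\Delta_{\n-\x+\mathbf 1}$, observe that the descents of $F$ other than a possible descent at $\x$ itself are exactly the disjoint union of the descents of the two halves, and then translate the resulting convolution into the product $h_\x(t)\,h_{\n-\x+\mathbf 1}(t) = A_{\x-\mathbf 1}(t)\,A_{\n-\x}(t)$ via Lemma~\ref{lemma: h-poly}. The only blemish is notational: the display should read $h_{\n-\x+\mathbf 1}(t)$ rather than $h_{\n-\x+\mathbf 1-\mathbf 1}(t)$, though your subsequent identification $h_{\n-\x+\mathbf 1}(t)=A_{\n-\x}(t)$ is stated correctly.
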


\begin{proof}
    Every facet $F \ni \x$ of $\Delta_\n$ can be written uniquely as the union of two saturated chains
    \[
    F' : \mathbf 1 \lessdot \cdots \lessdot \x \qquad \text{and} \qquad F'' : \x \lessdot \cdots \lessdot \n,
    \]
    which intersect only at $\x$.
    Clearly $F'$ can be any facet of $\Delta_\x$, viewed as a subposet of $\Delta_\n$.
    Likewise, $F''$ can be any facet of $\Delta_{\n + \x - \mathbf 1}$, viewed as a subposet of $\Delta_\n$ after translating coordinates by $\x-\mathbf 1$.
    Therefore $\#({\rm R}(F) \setminus \{\x\})  = {\rm R}(F') + {\rm R}(F'')$.
    Thus by~\eqref{h-vec as restrictions}, we have  \begin{align*} 
    h_\x(t) h_{\n - \x + \mathbf 1}(t) &= \left(\sum_{F'} t^{\#{\rm R}(F')} \right)\left(\sum_{F''} t^{\#{\rm R}(F'')}\right)\\
    &= \sum_{F', F''} t^{\#{\rm R}(F') + \#{\rm R}(F'')}\\
    &= \sum_{F \ni \x} t^{\#({\rm R}(F) \setminus \{\x\})},
    \end{align*}
    where the sums range over facets $F$, $F'$, and $F''$ of $\Delta_\n$, $\Delta_\x$, and $\Delta_{\n - \x + \mathbf 1}$, respectively.
    By Lemma~\ref{lemma: h-poly}, we can rewrite $h_{\x}(t) h_{\n -\x + \mathbf 1}(t)$ as $A_{\x-\mathbf{1}}(t) A_{\n - \x}(t)$.
\end{proof}

\begin{proof}[Proof of Theorem \ref{thm:main result h polys}]

By~\eqref{adding arrays is mult monomials}, we know that $\Sigma^{1}_{\n,s}[\x]$ equals the exponent of $z_\x$ in the product of all monomials in the graded component $K[\Delta_{\n}]_s$, which by~\eqref{Stanley decomp graded} has the decomposition
\begin{equation}
    \label{Stanley decomp in proof first}
    K[\Delta_\n]_s = \bigoplus_k \bigoplus_{\substack{F:\\ \#{\rm R}(F) = k}} K[F]_{s-k} \: z_{{\rm R}(F)},
\end{equation}
where the $F$'s in the inside sum are facets of $\Delta_\n$.
The outside sum in~\eqref{Stanley decomp in proof first} ranges from $k=0$ to $k= \min\{|\n| - \max(\n) - d + 1, \: s\}$; this follows from \eqref{degree hx} and from the fact that the degree $s-k$ must be nonnegative.
Obviously, the only monomials contributing to the exponent of $z_\x$ are those divisible by $z_\x$;
hence we may ignore all summands in~\eqref{Stanley decomp in proof first} such that $\x \not\in F$.
If $\x \in F$, then the subspace of $K[F]_{s-k}$ spanned by the monomials divisible by $z_\x$ is
\[
K[F]_{s-k-1} z_\x.
\]
Then since $\x$ may or may not lie in ${\rm R}(F)$, the subspace of $K[F]_{s-k} z_{{\rm R}(F)}$ spanned by monomials divisible by $z_\x$ is
\[
K[F]_{s-k-1} z_\x z_{_{{\rm R}(F) \setminus \{\x\}}} = K[F]_{s-k-1} z_{_{{\rm R}(F) \cup \{\x\}}}.
\]
Combining this with~\eqref{Stanley decomp in proof first}, we conclude that $\Sigma^{1}_{\n,s}[\x]$ equals the exponent of $z_\x$ in the product of all monomials in
\begin{equation}
    \label{Stanley decomp in proof second}
    \bigoplus_k \bigoplus_{\substack{F \ni \x:\\ \#({\rm R}(F) \setminus \{\x\}) = k}} K[F]_{s-k-1} z_{_{{\rm R}(F) \cup \{\x\}}}.
\end{equation}
Now applying Lemma~\ref{lemma:alpha} and Lemma~\ref{lemma: coeff tk} to~\eqref{Stanley decomp in proof second}, we see that the desired exponent of $z_\x$ equals
\begin{align*}
    \Sigma^{1}_{\n,s}[\x] &= \sum_k \sum_{\substack{F \ni \x:\\ \#({\rm R}(F) \setminus \{\x\}) = k}} \binom{|\n| - d + s - k}{s-k-1}\\[2ex]
    &= \sum_k \binom{|\n| - d + s - k}{s-k-1} \cdot [t^k] \, A_{\x - \mathbf{1}} (t) A_{\n - \x} (t),
\end{align*}
where the nonzero summands are those for which $k$ is less than or equal to both $s-1$ (otherwise the binomial coefficient is zero) and the degree of $A_{\x-\mathbf{1}} (t) A_{\n - \x}(t)$.
This degree is easily computed to be $\omega(\n,\x)$, using \eqref{degree hx}.
\end{proof}

\bibliographystyle{plain}
\bibliography{references}

\end{document}